\definecolor{webgreen}{rgb}{0,.5,0}
\numberwithin{equation}{section}
\def\C{{\mathds{C}}}
\def\R{{\mathbb{R}}}
\def\N{{\mathds{N}}}
\def\Z{{\mathds{Z}}}
\def\1{{\bf 1}}
\def\pont{$\bullet$ }
\newcommand{\DOT}{\text{\rm\Huge{.}}}
\newtheorem{theorem}{Theorem}[section]
\newtheorem{lemma}[theorem]{Lemma}
\newtheorem{cor}[theorem]{Corollary}
\begin{document}

\title{{\bf Menon-type identities concerning additive characters}}
\author{L\'aszl\'o T\'oth \\ \\ Department of Mathematics, University of P\'ecs \\
Ifj\'us\'ag \'utja 6, 7624 P\'ecs, Hungary \\ E-mail: {\tt ltoth@gamma.ttk.pte.hu}}
\date{}
\maketitle

\centerline{Arabian Journal of Mathematics {\bf 9} (2020), 697--705}

\begin{abstract} By considering even functions (mod $n$) we generalize a recent Menon-type identity by Li and Kim, involving additive characters of
the group $\Z_n$. We use a different approach, based on certain convolutional identities. Some other applications, including related formulas
for Ramanujan sums, are discussed, as well.
\end{abstract}

{\sl 2010 Mathematics Subject Classification}: 11A07, 11A25, 65T50

{\sl Key Words and Phrases}: Menon's identity, additive character, even function (mod $n$), convolution, multiplicative function,
Ramanujan's sum, discrete Fourier transform

\section{Introduction}

Menon's identity \cite{Men1965} states that for every $n\in \N$,
\begin{equation} \label{Menon_id}
\sum_{\substack{a=1\\ (a,n)=1}}^n (a-1,n) = \varphi(n) \tau(n),
\end{equation}
the notations used here and throughout the paper being fixed in Section \ref{Sect_Notations}. There are many generalizations and analogs of
identity \eqref{Menon_id} in the literature. See, e.g., the papers \cite{LiKimJNT,Tot2011,Tot2013,Tot2018,Tot,ZhaCao} and the references therein.

Let $\chi$ be a Dirichlet character (mod $n$) with conductor $d$, where $n,d\in \N$, $d\mid n$. Zhao and Cao \cite{ZhaCao}
derived the formula
\begin{equation} \label{Menon_id_char}
\sum_{a=1}^n (a-1,n) \chi(a)= \varphi(n) \tau(n/d),
\end{equation}
which recovers \eqref{Menon_id} if $\chi$ is the principal character (mod $n$), that is $d=1$.

The author \cite[Th.\ 2.4]{Tot2018} generalized identity \eqref{Menon_id_char} into
\begin{equation} \label{id_Toth}
\sum_{a=1}^n f_n(a-s) \chi(a) = \varphi(n) \chi^*(s)  \sum_{\substack{\delta \mid n/d\\ (\delta,s)=1}}
\frac{(\mu*f_n)(\delta d)}{\varphi(\delta d)},
\end{equation}
where $f_n$ is an even function (mod $n$), $s\in \Z$, and $\chi^*$ is the primitive character (mod $d$) that induces $\chi$.
We recall that a function $f_n:\Z \to \C$, $a\mapsto f_n(a)$ is said to be an even function (mod $n$) if
$f_n((a,n))=f_n(a)$ holds for every $a\in \Z$, where $n\in \N$ is fixed. Examples of even functions (mod $n$) are $f_n(a)=(a,n)$, more generally
$f_n(a)=F((a,n))$, where $F:\N \to \C$ is an arbitrary arithmetic function, and $f_n(a)=c_n(a)$, representing Ramanujan's sum.

In the case $f_n(a)=c_n(a)$, \eqref{id_Toth} gives (\cite[Cor.\ 2.6]{Tot2018})
\begin{equation} \label{c_n_chi}
\sum_{a=1}^n c_n(a-s) \chi(a) = d \varphi(n) \chi^*(s)  \sum_{\substack{\delta \mid n/d\\ (\delta,s)=1}}
\frac{\delta \mu(n/(\delta d))}{\varphi(\delta d)}.
\end{equation}

Notice that in \eqref{Menon_id_char}, \eqref{id_Toth} and \eqref{c_n_chi} the sums on the left hand sides are, in fact,
over $1\le a\le n$ with $(a,n)=1$, since
$\chi(a)=0$ for $(a,n)>1$. Here \eqref{c_n_chi} is a generalization of the first identity, due to Cohen \cite[Eq.\ (5.1)]{Coh1959}, of formulas
\begin{equation} \label{c_ident}
\sum_{\substack{a=1\\ (a,n)=1}}^n c_n(a-s) = \varphi(n) \sum_{\substack{\delta \mid n \\ (\delta,s)=1}}
\frac{\delta \mu(n/\delta)}{\varphi(\delta)} = \mu(n) c_n(s),
\end{equation}
the second formula of \eqref{c_ident} being the Brauer-Rademacher identity. See \cite[Cor.\ 34]{Coh1959} and \cite[Ch.\ 2]{McC1986}.

Recently, Li and Kim \cite{LiKimJNT} investigated the sums
\begin{equation*}
S(n,k):= \sum_{a=1}^n (a-1,n) e(ak/n)
\end{equation*}
and
\begin{equation} \label{def_sum_S_star}
S^*(n,k):= \sum_{\substack{a=1\\(a,n)=1}}^n (a-1,n) e(ak/n),
\end{equation}
by considering the additive characters $a\mapsto e(ak/n):= e^{2\pi iak/n}$ of the group $\Z_n$.
According to \cite[Th.\ 2.1]{LiKimJNT}, for every $n\in \N$ and
every $k\in \Z$ one has the identity
\begin{equation*}
S(n,k)= e(k/n) \sum_{\ell \mid (n,k)} \ell \varphi(n/\ell),
\end{equation*}
previously known in the literature, as mentioned by the authors, which is related to the discrete Fourier transform (DFT) of the gcd
function.

In order to investigate the sums $S^*(n,k)$ defined by \eqref{def_sum_S_star}, Li and Kim \cite{LiKimJNT} showed first that these sums
enjoy the modified multiplicativity property
\begin{equation*}
S^*(n_1n_2,k)= S^*(n_1,kn_2') S^*(n_2,kn_1')
\end{equation*}
for every $n_1,n_2\in \N$ with $(n_1,n_2)=1$, where $n_1n_1'\equiv 1$ (mod $n_2$), $n_2n_2'\equiv 1$ (mod $n_1$). See \cite[Prop.\ 3.1]{LiKimJNT}.
Then they computed the values $S^*(p^m,k)$ for prime powers $p^m$, and deduced that for every $n\in \N$, $k\in \Z$ such that
$\nu_p(n)-\nu_p(k)\ne 1$ for every prime $p\mid n$ one has (\cite[Th.\ 3.3]{LiKimJNT})
\begin{equation} \label{id_Li_Kim}
S^*(n,k)= e(k/n) \varphi(n) \tau((n,k)),
\end{equation}
which reduces to \eqref{Menon_id} in the case $n\mid k$. Furthermore, Li and Kim \cite[Th.\ 3.5]{LiKimJNT} established a formula for $S^*(n,k)$,
valid for every $n\in \N$ and $k\in \Z$.

In this paper we consider the following generalization of the sum $S^*(n,k)$:
\begin{equation} \label{def_T_f}
S_f(n,k,s):= \sum_{\substack{a=1\\ (a,n)=1}}^n f_n(a-s) e(ak/n),
\end{equation}
where $f=f_n$ is an even function (mod $n$), $n\in \N$, $s,k\in \Z$.  If $f_n$ is the constant $1$ function, then \eqref{def_T_f} reduces to the
Ramanujan sum $c_n(k)$. The sum \eqref{def_T_f} can be viewed as the DFT of the function
$h_n$ attached to $f_n$ and defined by $h_n(a)=f_n(a-s)$ if $(a,n)=1$ and  $h_n(a)=0$ if $(a,n)>1$. Several properties of the DFT of
even functions (mod $n$) were discussed in our paper \cite{TotHau2011}. However, if the function $f_n$ is even (mod $n$), then $h_n$ does
not have this property, in general.

Our results generalize those by Li and Kim \cite{LiKimJNT}. We use a different approach, similar to our paper \cite{Tot2018},
based on certain convolutional identities valid for every $n\in \N$.

We deduce in Theorem \ref{Th_gen} identities for the sums $S_f(n,k,s)$, valid
for every $n\in \N$ and $k\in \Z$,  which are simpler than the corresponding formula in \cite{LiKimJNT}. Then we consider the cases when
$\nu_p(n)\ge \nu_p(k)+2$ for every $p\mid n$, respectively $\nu_p(n)\le \nu_p(k)$ for every $p\mid n$
(Theorems \ref{Th_spec} and \ref{Th_spec2}). Finally, by taking sequences $(f_n)_{n\in \N}$ of even functions (mod $n$) such that
$n\mapsto f_n(a)$ is multiplicative for every fixed $a\in \Z$, we give in Theorem \ref{Th_n_1_n_2} and in Corollaries \ref{Cor_m} and \ref{Cor_m_1}
direct generalizations of formula \eqref{id_Li_Kim}. Some other applications, including identities for Ramanujan sums are derived, as well.
For example, Theorem \ref{Th_gen} gives that for every $n\in \N,s\in \Z$,
\begin{equation} \label{id_Raman_k_1}
\sum_{\substack{a=1\\ (a,n)=1}}^n c_n(a-s) e(a/n) = \sum_{\substack{d\delta=n \\(d, \delta s)=1}} d \mu^2(\delta)
e(\delta \delta'/n),
\end{equation}
where $\delta' \in \Z$ is such that $\delta \delta' \equiv s$ (mod $d$). If $n\in \N$ is squarefull (that is, $\nu_p(n)\ge 2$ for every prime
$p\mid n$) and $(s,n)=1$, then part iii) of Corollary \ref{Cor_m_1} shows that the sums in \eqref{id_Raman_k_1} equal
$n\, e(s/n)$. These identities may be compared to \eqref{c_ident}.

We point out that the key results, used in the proofs, are those concerning the sums $T_n(k,s,d)$, defined by \eqref{def_T}, which are themselves
generalizations of the Ramanujan sums.

\section{Notations} \label{Sect_Notations}

We use the following notations:

\pont $\N=\{1,2,\ldots\}$, $\Z$ is the set of integers,

\pont the prime power factorization of $n\in \Z$ is $n=\pm \prod_p p^{\nu_p(n)}$, the product being over the primes $p$, where all but
a finite number of the exponents $\nu_p(n)$ are zero, with the convention $\nu_p(0)=\infty$ for every prime $p$,

\pont $(a,b)$ denotes the greatest common divisor of $a,b\in \Z$,

\pont $(f*g)(n)=\sum_{d\mid n} f(d)g(n/d)$ is the Dirichlet convolution of the functions $f,g:\N \to \C$,

\pont $\mu$ is the M\"obius function,

\pont $\sigma_s(n)=\sum_{d\mid n} d^s$ ($s\in \R$),

\pont $\tau(n)=\sigma_0(n)$ is the number of divisors of $n$,

\pont $\sigma(n)=\sigma_1(n)$ is the sum of divisors of $n$,

\pont $J_s$ is the Jordan function of order $s$ given by
$J_s(n)=n^s\prod_{p\mid n} (1-1/p^s)$ ($s\in \R$),

\pont $\varphi=J_1$ is Euler's totient function,

\pont $e(x)=e^{2\pi i x}$ ($x\in \R$),

\pont $c_n(k)=\sum_{1\le a \le n, (a,n)=1} e(ak/n)$ is Ramanujan's sum ($n\in \N$, $k\in \Z$).

\section{Main results}

\begin{theorem} \label{Th_gen} Let $n\in \N$, $s,k\in \Z$ and let $f=f_n$ be an even function \textup{(mod $n$)}. Then
\begin{equation}
S_f(n,k,s) =  n \sum_{\substack{d\mid n\\ (d,s)=1}} \frac{(\mu*f_n)(d)}{d}  \sum_{\substack{\delta \mid n \\
(\delta,d)=1\\ \frac{n}{d\delta}\mid k}}
\frac{\mu(\delta)}{\delta} e(\delta \delta' k/n) \label{id_1}
\end{equation}
\begin{equation*}
=  \sum_{e \mid (n,k)} e \sum_{\substack{d\delta=n/e \\(d, \delta s)=1}} (\mu*f_n)(d) \mu(\delta)
e(\delta \delta' k/n),
\end{equation*}
where $\delta' \in \Z$ is such that $\delta \delta' \equiv s$ \textup{(mod $d$)}.
\end{theorem}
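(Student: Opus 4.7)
The strategy is to strip off the even function $f_n$ by M\"obius inversion on the divisor lattice of $n$, reducing $S_n(f,k,s)$ to a linear combination of the auxiliary sums
\[
T_n(k,s,d):=\sum_{\substack{1\le a\le n\\ (a,n)=1\\ d\mid a-s}}\exp(2\pi i ak/n),\qquad d\mid n,
\]
and then to evaluate each $T_n(k,s,d)$ in closed form by a second M\"obius inversion.

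First, using that $f_n(a-s)=f_n((a-s,n))=\sum_{d\mid n,\,d\mid a-s}(\mu*f_n)(d)$ (the standard expansion of an even function \textup{(mod $n$)}) and swapping the order of summation, one obtains $S_n(f,k,s)=\sum_{d\mid n}(\mu*f_n)(d)\,T_n(k,s,d)$. A quick check shows that the inner sum is empty unless $(d,s)=1$, since $a\equiv s\pmod d$ combined with $(a,n)=1$ forces $(s,d)\mid(a,n)=1$; this already plants the coprimality condition on $d$ that appears in the statement.

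Next, to evaluate $T_n(k,s,d)$ with $(d,s)=1$, I would write $[(a,n)=1]=\sum_{\delta\mid(a,n)}\mu(\delta)$ and interchange sums to get
\[
T_n(k,s,d)=\sum_{\delta\mid n}\mu(\delta)\sum_{\substack{1\le a\le n\\ \delta\mid a,\ d\mid a-s}}\exp(2\pi i ak/n).
\]
The system $\delta\mid a$, $d\mid a-s$ is solvable iff $(\delta,d)\mid s$, which, given $(d,s)=1$, forces $(\delta,d)=1$; then the Chinese Remainder Theorem yields a unique residue $a\equiv\delta\delta'\pmod{\delta d}$, where $\delta\delta'\equiv s\pmod d$. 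The inner sum is thus a geometric progression of length $n/(\delta d)$ with common ratio $\exp(2\pi i\delta d k/n)$, evaluating to $\frac{n}{\delta d}\exp(2\pi i\delta\delta' k/n)$ when $n/(d\delta)\mid k$, and to $0$ otherwise.

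Substituting this back and pulling out the factor $n/d$ delivers formula \eqref{id_1}. For the second form, I would re-index by $e:=n/(d\delta)$, so that $e\mid n$, $d\delta=n/e$, the condition $n/(d\delta)\mid k$ becomes $e\mid(n,k)$, and the prefactor $n/(d\delta)$ collapses to $e$; merging $(d,s)=1$ with $(d,\delta)=1$ into the single condition $(d,\delta s)=1$ yields the compact second identity. The main obstacle is the careful bookkeeping in the evaluation of $T_n(k,s,d)$: correctly identifying the CRT representative as $\delta\delta'$ with $\delta\delta'\equiv s\pmod d$ and extracting the vanishing condition $n/(d\delta)\mid k$ from the orthogonality of the additive characters; once those are in place, the remaining manipulations are routine.
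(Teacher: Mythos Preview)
Your proposal is correct and follows essentially the same approach as the paper: expand $f_n(a-s)$ via $\sum_{d\mid(a-s,n)}(\mu*f_n)(d)$ to reduce to the sums $T_n(k,s,d)$, then evaluate these by a second M\"obius inversion, solving the resulting congruence system and summing the geometric progression to obtain the condition $n/(d\delta)\mid k$ (this is exactly the content of the paper's Lemma~4.1). The re-indexing by $e=n/(d\delta)$ to obtain the second form is also the same as the paper's; the only cosmetic difference is that the paper substitutes $a=\delta j$ and solves $\delta j\equiv s\pmod d$ for $j$, whereas you phrase the same step as CRT on $a$.
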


For $n\mid k$, formula \eqref{id_1} is a special case of \eqref{id_Toth}.

\begin{theorem} \label{Th_spec} Under the assumptions of Theorem \ref{Th_gen} and with $\nu_p(n)\ge \nu_p(k)+2$ for every prime $p\mid n$,
we have
\begin{equation} \label{id_spec}
S_f(n,k,s) = e(ks/n) \sum_{\substack{d\mid (n,k)\\ (n/d,s)=1}} d\, (\mu*f_n)(n/d).
\end{equation}
\end{theorem}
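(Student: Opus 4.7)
The plan is to deduce the identity directly from the second form of \eqref{id_1} in Theorem \ref{Th_gen}:
\begin{equation*}
S_f(n,k,s) = \sum_{e \mid (n,k)} e \sum_{\substack{d\delta = n/e \\ (d,\delta s) = 1}} (\mu * f_n)(d)\,\mu(\delta) \exp(2\pi i \delta \delta' k/n).
\end{equation*}
Under the hypothesis $\nu_p(n) \ge \nu_p(k) + 2$ for every prime $p \mid n$, I will show the inner sum collapses to the single term with $\delta = 1$, $d = n/e$; then the exponential simplifies to $\exp(2\pi i ks/n)$ and a change of variable yields \eqref{id_spec}.

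The main step is the vanishing argument. Fix $e \mid (n,k)$. For any prime $p \mid n/e$ one has $p \mid n$, so by hypothesis
\begin{equation*}
\nu_p(n/e) = \nu_p(n) - \nu_p(e) \ge \nu_p(n) - \nu_p(k) \ge 2.
\end{equation*}
If $\delta$ contributes a nonzero term then $\mu(\delta) \ne 0$, so $\delta$ is squarefree and every prime $p \mid \delta$ satisfies $\nu_p(\delta) = 1$; the coprimality $(d,\delta)=1$ then forces $\nu_p(d) = 0$, contradicting $\nu_p(d)+\nu_p(\delta) = \nu_p(n/e) \ge 2$. Hence $\delta = 1$ and $d = n/e$, and the side condition $(d,\delta s)=1$ becomes $(n/e,s)=1$.

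With $\delta = 1$ the congruence $\delta\delta' \equiv s \pmod{d}$ gives $\delta' = s + t(n/e)$ for some $t\in\Z$, so
\begin{equation*}
\exp(2\pi i \delta' k/n) = \exp(2\pi i sk/n)\,\exp(2\pi i tk/e) = \exp(2\pi i sk/n),
\end{equation*}
since $e \mid k$ makes $tk/e$ an integer; this also confirms that the phase is independent of the choice of $\delta'$. Substituting back yields
\begin{equation*}
S_f(n,k,s) = \exp(2\pi i ks/n) \sum_{\substack{e \mid (n,k) \\ (n/e,s)=1}} e\,(\mu * f_n)(n/e),
\end{equation*}
which is \eqref{id_spec} after renaming $e$ to $d$. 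The main obstacle is the valuation argument in the second paragraph, and the hypothesis $\nu_p(n)\ge \nu_p(k)+2$ is tailored precisely to guarantee that $n/e$ is squareful at every prime dividing it, so that squarefreeness of $\delta$ together with $(d,\delta)=1$ leaves no room other than $\delta=1$.
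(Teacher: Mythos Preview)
Your proof is correct and follows essentially the same approach as the paper: the paper applies the intermediate identity $S_f(n,k,s)=\sum_{d\mid n}(\mu*f_n)(d)\,T_n(k,s,d)$ together with Lemma~\ref{Lemma_Ramanujan_spec}, whose proof is exactly your valuation argument forcing $\delta=1$, whereas you start from the already-assembled second formula of Theorem~\ref{Th_gen} and run the same valuation step there. The substance---squarefreeness of $\delta$, coprimality $(d,\delta)=1$, and $\nu_p(n/e)\ge 2$ leaving no room for $\delta\neq 1$---is identical.
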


If $\nu_p(n)\le \nu_p(k)$ for every prime $p\mid n$ (that is, $n\mid k$), then $e(ak/n)=1$ for every integer $a$ and
\begin{equation*}
S_f(n,k,s) = \sum_{\substack{a=1\\ (a,n)=1}}^n f_n(a-s).
\end{equation*}

As a direct generalization of Menon's identity \eqref{Menon_id} we have the following result.

\begin{theorem} \label{Th_spec2} Under the assumptions of Theorem \ref{Th_gen} and with $\nu_p(n)\le \nu_p(k)$ for every prime $p\mid n$
one has
\begin{equation} \label{form_k_null}
S_f(n,k,s) = \varphi(n) \sum_{\substack{d\mid n\\ (d,s)=1}} \frac{(\mu*f_n)(d)}{\varphi(d)}.
\end{equation}
\end{theorem}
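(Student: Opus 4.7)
The plan is to derive \eqref{form_k_null} as a specialization of the first form \eqref{id_1} in Theorem \ref{Th_gen}. The hypothesis $\nu_p(n)\le \nu_p(k)$ for every $p\mid n$ is equivalent to $n\mid k$, and under this assumption two simplifications happen simultaneously in \eqref{id_1}: first, $\exp(2\pi i\delta\delta' k/n)=1$ because $n\mid k$ makes the exponent an integer multiple of $2\pi i$; second, once $d\delta\mid n$ (which is what the conditions $\delta\mid n$, $(\delta,d)=1$, $\frac{n}{d\delta}\in\Z$ amount to, and which in turn says $\delta\mid n/d$), the divisibility $\frac{n}{d\delta}\mid k$ is automatic from $\frac{n}{d\delta}\mid n\mid k$. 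So the inner sum collapses to $\sum_{\delta\mid n/d,\,(\delta,d)=1}\mu(\delta)/\delta$.

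Next I would evaluate this inner sum by a short Euler-product computation. Since $\mu$ vanishes on non-squarefree arguments, the sum equals $\prod_{p\mid n/d,\,p\nmid d}(1-1/p)$. Because $d\mid n$, the primes dividing $n/d$ but not $d$ are exactly the primes dividing $n$ but not $d$, so this product equals $\prod_{p\mid n,\,p\nmid d}(1-1/p)=(\varphi(n)/n)/(\varphi(d)/d)=\varphi(n)\,d/(n\,\varphi(d))$. Substituting back into \eqref{id_1}, the prefactor $n/d$ combines with this to give $\varphi(n)/\varphi(d)$, leaving
\[
S_f(n,k,s)=\sum_{\substack{d\mid n\\ (d,s)=1}}(\mu*f_n)(d)\,\frac{\varphi(n)}{\varphi(d)},
\]
which is precisely \eqref{form_k_null}.

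As a cross-check, and in fact an alternative proof, one may observe that $n\mid k$ makes $\exp(2\pi iak/n)=1$ for all $a$, so $S_f(n,k,s)=\sum_{(a,n)=1}f_n(a-s)$; this is the left-hand side of \eqref{id_Toth} applied to the principal character $\chi$ (mod $n$), whose conductor is $d=1$ and whose inducing primitive character $\chi^*$ satisfies $\chi^*(s)=1$ for all $s\in\Z$. Specializing \eqref{id_Toth} to this case reproduces \eqref{form_k_null} at once. This route bypasses Theorem \ref{Th_gen} but depends on citing a previous result of the author; I would therefore present the derivation from \eqref{id_1} as the main argument and mention the alternative as a remark.

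The only non-routine step is the Euler-product manipulation, and the only subtle point is making sure the divisor conditions in \eqref{id_1} are correctly translated under $n\mid k$; once that translation is made, the identity reduces to a standard telescoping of Euler factors and presents no real obstacle.
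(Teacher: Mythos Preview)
Your proof is correct and is essentially the paper's argument: the paper combines identity \eqref{4} with Lemma \ref{Lemma_Ramanujan_spec_2}, whose proof is precisely the Euler-product computation you carry out (noting $\exp(2\pi i\delta\delta'k/n)=1$ and evaluating $\frac{n}{d}\sum_{\delta\mid n,\ (\delta,d)=1}\mu(\delta)/\delta=\varphi(n)/\varphi(d)$). The only difference is organizational---the paper isolates the inner-sum evaluation as a separate lemma, whereas you perform it inline starting from the already-assembled formula \eqref{id_1}.
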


Formula \eqref{form_k_null} is, in fact, a special case of our result \cite[Th.\ 2.1]{Tot2018}, i.e., the case when $\chi$
is the principal character (mod $n$) in \eqref{id_Toth}.

\begin{cor} \label{Remark_spec_cases} Theorems \ref{Th_gen}, \ref{Th_spec} and \ref{Th_spec2}
apply to the following functions:

i) $f_n(a)=(a,n)^m$ \textup{($m\in \R$)} with $(\mu*f_n)(d)=J_m(d)$ \textup{($d\mid n$)},

ii) $f_n(a)=\sigma_m((a,n))$ \textup{($m\in \R$)} with $(\mu*f_n)(d)=d^m$ \textup{($d\mid n$)},

iii) $f_n(a)=c_n(a)$ with $(\mu*f_n)(d)=d \mu(n/d)$ \textup{($d\mid n$)}.
\end{cor}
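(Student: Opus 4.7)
The plan is to verify, for each of the three functions listed, the two prerequisites needed to invoke Theorems~\ref{Th_gen}, \ref{Th_spec}, and \ref{Th_spec2}: namely that $f_n$ is an even function (mod $n$), and the stated formula for $(\mu*f_n)(d)$ on divisors of $n$. The evenness is immediate in cases (i) and (ii), since any function of the form $f_n(a)=F((a,n))$ --- as noted in the introduction --- satisfies $f_n((a,n))=F(((a,n),n))=F((a,n))=f_n(a)$. For (iii), the classical formula $c_n(k)=\sum_{d\mid(n,k)} d\,\mu(n/d)$ shows that $c_n(k)$ depends only on $(k,n)$, so Ramanujan's sum is also even (mod $n$).

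For the convolution values, I would work on the divisor lattice of $n$, where $(d,n)=d$ collapses each function considerably. In (i), $f_n(d)=d^m$, and then $(\mu*\mathrm{id}^m)(d)=J_m(d)$ is the standard definition of the Jordan totient. In (ii), $f_n(d)=\sigma_m(d)=(1*\mathrm{id}^m)(d)$, hence $\mu*f_n=(\mu*1)*\mathrm{id}^m=\mathrm{id}^m$, giving the value $d^m$. In (iii), the formula above exhibits $c_n$ on the divisors of $n$ as the convolution $1*g$ with $g(e)=e\,\mu(n/e)$; M\"obius inversion then yields $(\mu*c_n)(d)=d\,\mu(n/d)$, as claimed.

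With these two points established in each case, the conclusions of Theorems~\ref{Th_gen}, \ref{Th_spec}, and \ref{Th_spec2} follow immediately by substitution. There is no real obstacle here: the only moment requiring a little care is case (iii), where one must invoke the explicit M\"obius-inversion form of Ramanujan's sum rather than its original definition as a character sum; but this representation is classical and the rest of the computation is a direct application of M\"obius inversion restricted to divisors of $n$.
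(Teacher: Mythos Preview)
Your verification is correct and complete. The paper itself does not give a proof of this corollary at all: it treats the three identities as known, merely adding after the statement a pointer to \cite[Appl.\ 1]{TotHau2011} for the Ramanujan-sum identity $\sum_{ab=d}\mu(a)c_n(b)=d\,\mu(n/d)$ in case~(iii). Your argument supplies exactly the details the paper omits, and your handling of (iii) via the classical divisor-sum representation of $c_n$ followed by M\"obius inversion on the divisor lattice of $n$ is the standard route to that cited identity.
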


Note that Theorem \ref{Th_spec2} applied with i) of Corollary \ref{Remark_spec_cases} and $m=1$ yields
\begin{equation*}
\sum_{\substack{a=1\\ (a,n)=1}}^n (a-s,n) = \varphi(n) \tau(n'),
\end{equation*}
where $n=n'n''$, $n'=\max \{d: d\mid n, (d,s)=1\}$. This generalizes Menon's identity \eqref{Menon_id} to any $n\in \N$ and $s\in \Z$.
Also see \cite[Eq.\ (35)]{Tot2011}.

See, e.g., \cite[Appl.\ 1]{TotHau2011} for the identity $\sum_{ab=d} \mu(a)c_n(b)= d\mu(n/d)$ ($d\mid n$), mentioned  in iii) of
Corollary \ref{Remark_spec_cases}. Several other special cases can be considered, as well.

\begin{theorem} \label{Th_seq_multipl} Let $(f_n)_{n\in \N}$ be a sequence of even functions \textup{(mod $n$)} such that
$n\mapsto f_n(a)$ is multiplicative for every fixed $a\in \Z$. Let $n_1,n_2\in \N$, $(n_1,n_2)=1$, $s,k\in \Z$.  Then
\begin{equation*}
S_f(n_1n_2,k,s)=S_f(n_1,kn_2',s) S_f(n_2,kn_1',s),
\end{equation*}
where $n_1',n_2'\in \Z$ are such that $n_1n_1'\equiv 1$ \textup{(mod $n_2$)} and $n_2n_2'\equiv 1$ \textup{(mod $n_1$)}.
\end{theorem}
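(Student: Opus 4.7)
The plan is to reduce the sum over $(a,n_1n_2)=1$ to a double sum over pairs $(a_1,a_2)$ with $(a_i,n_i)=1$ via the Chinese remainder theorem, and then to check that each of the three ingredients of $S_f$ (the even function factor, the additive character, the coprimality condition) factors across the two moduli.

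Concretely, I would parametrize the residues $a\pmod{n_1n_2}$ with $(a,n_1n_2)=1$ by $a \equiv a_1 n_2 n_2' + a_2 n_1 n_1'\pmod{n_1n_2}$, where $a_1$ runs over reduced residues mod $n_1$ and $a_2$ over reduced residues mod $n_2$. This yields $a\equiv a_1\pmod{n_1}$ and $a\equiv a_2\pmod{n_2}$, so the condition $(a,n_1n_2)=1$ is equivalent to $(a_1,n_1)=1$ and $(a_2,n_2)=1$. Moreover,
\[
\frac{ak}{n_1n_2} \equiv \frac{a_1\,(kn_2')}{n_1}+\frac{a_2\,(kn_1')}{n_2}\pmod{1},
\]
so the exponential splits as $\exp(2\pi i a_1 (kn_2')/n_1)\exp(2\pi i a_2(kn_1')/n_2)$.

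The key algebraic step is the factorization $f_{n_1n_2}(a-s)=f_{n_1}(a-s)f_{n_2}(a-s)$, which is precisely the multiplicativity hypothesis applied at the fixed integer argument $a-s$. Combining this with the evenness of each $f_{n_i}\pmod{n_i}$, I would then use that $f_{n_i}(a-s)=f_{n_i}((a-s,n_i))=f_{n_i}((a_i-s,n_i))=f_{n_i}(a_i-s)$, so the $f$-factor depends only on $a_i$ modulo $n_i$ on the $i$-th side.

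With these three factorizations in place the double sum separates, and matching the resulting factors against the definition \eqref{def_T_f} gives $S_f(n_1,kn_2',s)$ and $S_f(n_2,kn_1',s)$ respectively. The only delicate point I expect is bookkeeping: making sure the bijection from reduced residues mod $n_1n_2$ to pairs is used cleanly, and that the evenness is invoked for each $f_{n_i}$ separately (not for $f_{n_1n_2}$) so that the argument $a-s$ can legitimately be replaced by $a_i-s$. Once the multiplicativity assumption is used to split $f_{n_1n_2}$, everything else is essentially CRT bookkeeping.
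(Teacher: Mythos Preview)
Your proposal is correct and follows essentially the same route as the paper: split the reduced residue system modulo $n_1n_2$ via CRT, use the multiplicativity hypothesis at the fixed argument $a-s$ to factor $f_{n_1n_2}$, then use evenness of each $f_{n_i}$ to reduce to $a_i-s$, and match with the definition of $S_f$. The only cosmetic difference is that the paper parametrizes by $a=a_1n_2+a_2n_1$ and performs the change of variables $b_i=a_in_{3-i}$ at the end, whereas you use the idempotent form $a\equiv a_1n_2n_2'+a_2n_1n_1'$ so that $a\equiv a_i\pmod{n_i}$ from the start; both lead to the same factorization.
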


Note that under assumptions of Theorem \ref{Th_seq_multipl}, $f_n(a)$ is multiplicative viewed as a function of two variables. See
\cite[Prop. 4]{TotHau2011}. Examples of sequences of functions satisfying the assumptions of Theorem \ref{Th_seq_multipl} are the
sequence of the Ramanujan sums $(c_n(\DOT))_{n\in \N}$ and $(f_n)_{n\in \N}$, where $f_n(a)=F((a,n))$ and $F$ is an arbitrary multiplicative
function.

\begin{theorem} \label{Th_n_1_n_2} Under the assumptions of Theorem \ref{Th_seq_multipl} and if $\nu_p(n)- \nu_p(k)\ne 1$ for every
prime $p\mid n$, we have
\begin{equation*}
S_f(n,k,s) = e(ks/n) \varphi(n_2) \sum_{\substack{d\mid (n_1,k)\\ (n_1/d,s)=1}} d\, (\mu*f_{n_1})(n_1/d)
\sum_{\substack{d\mid n_2\\ (d,s)=1}} \frac{(\mu*f_{n_2})(d)}{\varphi(d)},
\end{equation*}
where $n=n_1n_2$ such that $\nu_p(n)\ge \nu_p(k)+2$ for every prime $p\mid n_1$, and $\nu_p(n)\le \nu_p(k)$ for every prime
$p\mid n_2$.
\end{theorem}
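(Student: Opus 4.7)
The plan is to combine the product formula of Theorem \ref{Th_seq_multipl} with the two specialised evaluations Theorems \ref{Th_spec} and \ref{Th_spec2}. The hypothesis $\nu_p(n)-\nu_p(k)\ne 1$ guarantees that every prime $p\mid n$ lies in exactly one of two camps, so I factor $n=n_1n_2$ by sending $p$ to $n_1$ when $\nu_p(n)\ge \nu_p(k)+2$ and to $n_2$ when $\nu_p(n)\le \nu_p(k)$. Since $(n_1,n_2)=1$, Theorem \ref{Th_seq_multipl} immediately splits the sum as $S_f(n,k,s)=S_f(n_1,kn_2',s)\,S_f(n_2,kn_1',s)$.

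Next I evaluate each factor separately. For $S_f(n_1,kn_2',s)$, the relation $n_2n_2'\equiv 1\pmod{n_1}$ forces $(n_2',n_1)=1$, so for every $p\mid n_1$ we have $\nu_p(kn_2')=\nu_p(k)$ and hence $\nu_p(n_1)=\nu_p(n)\ge \nu_p(kn_2')+2$; moreover $(n_1,kn_2')=(n_1,k)$. These are precisely the hypotheses of Theorem \ref{Th_spec}, and its application yields the first half of the claimed formula with the exponential $\exp(2\pi i kn_2's/n_1)$. For $S_f(n_2,kn_1',s)$, the condition $\nu_p(n_2)\le \nu_p(k)$ for all $p\mid n_2$ is equivalent to $n_2\mid k$, hence $n_2\mid kn_1'$, and Theorem \ref{Th_spec2} gives $\varphi(n_2)\sum_{d\mid n_2,\,(d,s)=1}(\mu*f_{n_2})(d)/\varphi(d)$.

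The one subtle point, and the main thing worth checking, is that the exponential coming from the first factor coincides with $\exp(2\pi i ks/n)$. Writing $n_2n_2'=1+\alpha n_1$ for some $\alpha\in\Z$, I compute
$$\frac{kn_2's}{n_1}-\frac{ks}{n}=\frac{ks(n_2n_2'-1)}{n_1n_2}=\frac{ks\alpha}{n_2},$$
which is an integer precisely because $n_2\mid k$. Consequently $\exp(2\pi i kn_2's/n_1)=\exp(2\pi i ks/n)$, and multiplying the two evaluated factors yields the stated identity. The only real obstacle is this exponential reduction; once one recognises that the two factorisation hypotheses conspire to give $n_2\mid k$, everything else is a clean application of results already proved in the paper.
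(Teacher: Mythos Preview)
Your proposal is correct and follows essentially the same route as the paper's own proof: factor $n=n_1n_2$, apply Theorem~\ref{Th_seq_multipl} to split the sum, evaluate the two factors via Theorems~\ref{Th_spec} and~\ref{Th_spec2} respectively, and reduce the exponential using $n_2n_2'=1+\alpha n_1$ together with $n_2\mid k$. The paper's argument is identical in structure and in the handling of the exponential and of $(n_1,kn_2')=(n_1,k)$.
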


\begin{cor} \label{Cor_m} Let $n\in \N$, $s,k\in \Z$ such that $\nu_p(n)- \nu_p(k)\ne 1$ for every prime $p\mid n$. Let $m\in \R$. Then
\begin{align} \label{11}
\sum_{\substack{a=1\\ (a,n)=1}}^n (a-s,n)^m e(ak/n) =  e(ks/n) J_m(n_1) \varphi(n_2) \sum_{\substack{d\mid (n_1,k)\\ (n_1/d,s)=1}}
d^{1-m}  \sum_{\substack{d\mid n_2 \\ (d,s)=1}} \frac{J_m(d)}{\varphi(d)},
\end{align}
\begin{align} \label{12}
\sum_{\substack{a=1\\ (a,n)=1}}^n \sigma_m((a-s,n)) e(ak/n) = e(ks/n) n_1^m  \varphi(n_2)
\sum_{\substack{d\mid (n_1,k)\\ (n_1/d,s)=1}}  d^{1-m} \sum_{\substack{d\mid n_2 \\ (d,s)=1}} \frac{d^m}{\varphi(d)},
\end{align}
\begin{align} \label{13}
\sum_{\substack{a=1\\ (a,n)=1}}^n c_n(a-s) e(ak/n) =  e(ks/n) n_1 \varphi(n_2) \sum_{\substack{d\mid (n_1,k)\\ (n_1/d,s)=1}}
\mu(d)  \sum_{\substack{d\mid n_2 \\ (d,s)=1}} \frac{d\mu(n_2/d)}{\varphi(d)},
\end{align}
where $n_1$ and $n_2$ are defined as in Theorem \ref{Th_n_1_n_2}.
\end{cor}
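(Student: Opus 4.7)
The plan is to deduce Corollary \ref{Cor_m} as a direct specialization of Theorem \ref{Th_n_1_n_2}, applied to each of the three sequences of even functions listed in Corollary \ref{Remark_spec_cases}.

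First I would check that in each of cases (i)--(iii), $n \mapsto f_n(a)$ is multiplicative in $n$ for every fixed $a$, so that Theorem \ref{Th_n_1_n_2} is applicable. For (i) $f_n(a)=(a,n)^m$ and (ii) $f_n(a)=\sigma_m((a,n))$, this follows from the multiplicativity of $n \mapsto (a,n)$ (since $(a,n_1n_2)=(a,n_1)(a,n_2)$ with $((a,n_1),(a,n_2))=1$ when $(n_1,n_2)=1$) combined with the multiplicativity of $x \mapsto x^m$ and $\sigma_m$. For (iii) $f_n(a)=c_n(a)$, this is the well-known multiplicativity of the Ramanujan sum in the index, noted in the remark following Theorem \ref{Th_seq_multipl}. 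The Dirichlet convolutions $(\mu*f_n)(d)$ are already recorded in Corollary \ref{Remark_spec_cases}.

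Next I would substitute these expressions into the formula of Theorem \ref{Th_n_1_n_2}. The $n_2$-sums and the factors $\exp(2\pi i ks/n)\varphi(n_2)$ transcribe directly to the right-hand sides of \eqref{11}--\eqref{13}. For the $n_1$-sums, cases (ii) and (iii) are immediate: $d\,(n_1/d)^m = n_1^m d^{1-m}$ and $d\,(n_1/d)\,\mu(n_1/(n_1/d)) = n_1\mu(d)$, so the factors $n_1^m$ and $n_1$ pull outside the sum at once.

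The one nontrivial point is case (i), where I need the reduction $d\,J_m(n_1/d) = J_m(n_1)\,d^{1-m}$, equivalently $J_m(n_1) = d^m J_m(n_1/d)$, for every $d$ appearing in the sum. Using $J_m(x) = x^m\prod_{p\mid x}(1-1/p^m)$, this identity is equivalent to the statement that every prime $p\mid n_1$ also divides $n_1/d$. This is precisely where the hypothesis $\nu_p(n_1) \ge \nu_p(k)+2$ for every $p\mid n_1$ intervenes: since $d \mid (n_1,k)$, we have $\nu_p(d) \le \nu_p(k) \le \nu_p(n_1)-2$, hence $\nu_p(n_1/d) \ge 2 > 0$. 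This is the main (and essentially only) obstacle in the argument; once it is handled, \eqref{11} follows by pulling $J_m(n_1)$ out of the $n_1$-sum.
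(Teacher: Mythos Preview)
Your proposal is correct and follows essentially the same approach as the paper: apply Theorem \ref{Th_n_1_n_2} to each of the three functions, invoke the convolutions from Corollary \ref{Remark_spec_cases}, and in case (i) use the hypothesis $\nu_p(n_1)\ge \nu_p(k)+2$ (hence $\nu_p(d)\le \nu_p(n_1)-2$ for $d\mid(n_1,k)$) to justify $J_m(n_1/d)=J_m(n_1)/d^m$. Your explicit check of the multiplicativity of $n\mapsto f_n(a)$ is a welcome addition that the paper leaves to the remark after Theorem \ref{Th_seq_multipl}.
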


Next we consider the case when $(s,n)=1$.

\begin{cor} \label{Cor_m_1} Let $n\in \N$, $s,k\in \Z$ such that $(s,n)=1$ and $\nu_p(n)- \nu_p(k)\ne 1$ for every prime $p\mid n$.
Let $n_1$ and $n_2$ be defined as in Theorem \ref{Th_n_1_n_2}.

i) If $m\in \R$, $m\ne 1$, then
\begin{align*}
\sum_{\substack{a=1\\ (a,n)=1}}^n (a-s,n)^m e(ak/n) =  e(ks/n) J_m(n_1)  \sigma_{1-m}((n_1,k)) \varphi(n_2) F_m(n_2),
\end{align*}
where
\begin{align*}
F_m(n_2):=\sum_{d\mid n_2} \frac{J_m(d)}{\varphi(d)}= \prod_{p^{\nu_p(n_2)}\mid \mid n_2}
\left(1+\frac{p^m-1}{p-1} \cdot\frac{p^{(m-1)\nu_p(n_2)}-1}{p^{m-1}-1}\right).
\end{align*}

For $m=1$,
\begin{align} \label{1111}
\sum_{\substack{a=1\\ (a,n)=1}}^n (a-s,n) e(ak/n) = e(ks/n) \varphi(n)\tau((n,k)).
\end{align}

ii) If $m\in \R$, $m\ne 1$, then
\begin{align*}
\sum_{\substack{a=1\\ (a,n)=1}}^n \sigma_m((a-s,n)) e(ak/n) =  e(ks/n) n_1^m
\sigma_{1-m}((n_1,k))\varphi(n_2) G_m(n_2),
\end{align*}
where
\begin{align*}
G_m(n_2): =\sum_{d\mid n_2} \frac{d^m}{\varphi(d)}= \prod_{p^{\nu_p(n_2)}\mid \mid n_2}
\left(1+\frac{p^m}{p-1} \cdot \frac{p^{(m-1)\nu_p(n_2)}-1}{p^{m-1}-1}\right).
\end{align*}

For $m=1$,
\begin{align*}
\sum_{\substack{a=1\\ (a,n)=1}}^n \sigma((a-s,n)) e(ak/n) = e(ks/n) n_1
\tau((n_1,k))\varphi(n_2) G_1(n_2),
\end{align*}
where
\begin{align*}
G_1(n_2): =\sum_{d\mid n_2} \frac{d}{\varphi(d)}= \prod_{p^{\nu_p(n_2)}\mid \mid n_2}
\left(1+ \frac{p\nu_p(n_2)}{p-1}\right).
\end{align*}

iii) We have
\begin{equation*}
\sum_{\substack{a=1\\ (a,n)=1}}^n c_n(a-s) e(ak/n) =  \begin{cases} e(ks/n) n_1,  & \text{ if $(n_1,k)=1$ and $n_2$ is squarefree}, \\
0, & \text{ otherwise}. \end{cases}
\end{equation*}
\end{cor}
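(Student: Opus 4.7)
The starting point is Corollary \ref{Cor_m}. The extra assumption $(s,n)=1$ implies $(s,d)=1$ for every divisor $d$ of $n=n_1n_2$, so all coprimality conditions of the form $(d,s)=1$ and $(n_1/d,s)=1$ appearing in \eqref{11}, \eqref{12} and \eqref{13} are automatically satisfied. Consequently the $d\mid(n_1,k)$ sums collapse to $\sum_{d\mid(n_1,k)} d^{1-m}=\sigma_{1-m}((n_1,k))$ in parts (i) and (ii), and to $\sum_{d\mid(n_1,k)}\mu(d)$ in part (iii), while the $n_2$-sums become the unconditional $F_m(n_2)$, $G_m(n_2)$ and $\sum_{d\mid n_2} d\mu(n_2/d)/\varphi(d)$.

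To obtain the Euler product formulas I would observe that each of the $n_2$-sums is a Dirichlet convolution of multiplicative functions, hence itself multiplicative. The computation at a prime power $p^e\parallel n_2$ uses $J_m(p^j)=p^{(j-1)m}(p^m-1)$ and $\varphi(p^j)=p^{j-1}(p-1)$ for $j\ge 1$, which yields $J_m(p^j)/\varphi(p^j)=\frac{p^m-1}{p-1}\,p^{(j-1)(m-1)}$. Summing this geometric progression in $p^{m-1}$ for $j=1,\ldots,e$ gives the stated expression for $F_m$ when $m\neq 1$; the parallel calculation with $d^m/\varphi(d) = \frac{p^m}{p-1}\,p^{(j-1)(m-1)}$ at $d=p^j$ produces the formula for $G_m$.

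The $m=1$ cases require a separate treatment because $p^{m-1}-1$ vanishes and the geometric-series formula breaks down. Here $J_1=\varphi$ gives $F_1(n_2)=\tau(n_2)$ directly, and for $G_1$ the geometric sum is replaced by the arithmetic one $\sum_{j=1}^{e} p/(p-1)=ep/(p-1)$, producing the factor $1+ep/(p-1)$. To arrive at \eqref{1111} one combines $F_1(n_2)=\tau(n_2)$ with $\varphi(n_1)\varphi(n_2)=\varphi(n)$ and with $\tau((n,k))=\tau((n_1,k))\tau(n_2)$, the latter following from $n_2\mid k$ and $(n_1,n_2)=1$, which force $(n,k)=(n_1,k)\cdot n_2$ with coprime factors.

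For part (iii) the multiplicative $n_2$-sum is supported, at each prime power $p^e$, only on $d=p^e$ and $d=p^{e-1}$; for $e\ge 2$ the two contributions cancel to $0$, while for $e=1$ one obtains $p/(p-1)-1=1/(p-1)$. Hence the sum equals $1/\varphi(n_2)$ if $n_2$ is squarefree and vanishes otherwise, and the factor $\sum_{d\mid(n_1,k)}\mu(d)$ equals $1$ if $(n_1,k)=1$ and $0$ otherwise; multiplying by the prefactor $n_1\varphi(n_2)$ yields the stated dichotomy. The only point really needing care is the degenerate case $m=1$ in parts (i) and (ii), where the geometric-series identity must be replaced by its arithmetic counterpart before the cancellation $\varphi(n_1)\varphi(n_2)=\varphi(n)$ can be used.
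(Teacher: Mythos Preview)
Your proposal is correct and follows exactly the route the paper intends: the paper's own proof is the single sentence ``Follows immediately by Corollary~\ref{Cor_m}'', and you have simply spelled out the routine computations (dropping the coprimality constraints under $(s,n)=1$, evaluating the resulting $\sigma_{1-m}$ sums, and working out the local Euler factors for $F_m$, $G_m$ and the Ramanujan-sum case) that the paper leaves implicit.
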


In the case $s=1$, \eqref{1111} reduces to the identity \eqref{id_Li_Kim} by Li and Kim.

\section{Proofs}

We need the following lemmas. For $n,d\in \N$ and $k,s\in \Z$ consider the sum
\begin{equation} \label{def_T}
T_n(k,s,d):=\sum_{\substack{a=1\\ (a,n)=1\\ a\equiv s \text{\rm (mod $d$)} }}^n e(ak/n),
\end{equation}
which reduces to Ramanujan's sum $c_n(k)$ if $d=1$.

\begin{lemma} \label{Lemma_Ramanujan_gen} Let $n,d\in \N$, $k,s\in \Z$ such that $d\mid n$. Then
\begin{equation*}
T_n(k,s,d)=
\begin{cases} \displaystyle \frac{n}{d} \sum_{\substack{\delta \mid n \\ (\delta,d)=1\\ \frac{n}{d\delta}\mid k}} \frac{\mu(\delta)}{\delta}
e(\delta \delta' k/n),
& \text{ if $(s,d)=1$}, \\ 0, & \text{ otherwise},
\end{cases}
\end{equation*}
where $\delta' \in \Z$ is such that $\delta \delta' \equiv s$ \textup{(mod $d$)}.
\end{lemma}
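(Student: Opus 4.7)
The plan is to evaluate the sum $T_n(k,s,d)$ by standard Möbius inversion on the coprimality condition followed by a geometric series evaluation, carefully tracking divisibility constraints so the final exponential can be written as $\exp(2\pi i\delta\delta' k/n)$.

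First I would dispose of the case $(s,d)>1$. If a prime $p$ divides both $s$ and $d$, then $a\equiv s\pmod d$ forces $p\mid a$; but $p\mid d\mid n$ and $(a,n)=1$ force $p\nmid a$, a contradiction. Hence the index set is empty and $T_n(k,s,d)=0$.

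Now assume $(s,d)=1$. I would apply $\sum_{\delta\mid(a,n)}\mu(\delta)=\1_{(a,n)=1}$ and swap summation:
\begin{equation*}
T_n(k,s,d)=\sum_{\delta\mid n}\mu(\delta)\sum_{\substack{1\le a\le n\\ \delta\mid a\\ a\equiv s\,(\mathrm{mod}\,d)}}\exp(2\pi iak/n).
\end{equation*}
The inner congruence system $a\equiv 0\pmod\delta$, $a\equiv s\pmod d$ is solvable iff $(\delta,d)\mid s$; combined with $(s,d)=1$, this forces $(\delta,d)=1$, so only $\delta$ coprime to $d$ contribute. Writing $a=\delta b$, the condition $\delta b\equiv s\pmod d$ becomes $b\equiv\delta'\pmod d$, where $\delta\delta'\equiv s\pmod d$, and $b$ runs over $\{1,2,\dots,n/\delta\}$. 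Since $(\delta,d)=1$ and $d\mid n$, we have $d\delta\mid n$, so $b$ takes exactly $n/(d\delta)$ values in arithmetic progression $b=b_0+jd$ with $b_0\equiv\delta'\pmod d$ fixed.

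The inner sum then factors as a geometric series:
\begin{equation*}
\exp(2\pi i\delta b_0 k/n)\sum_{j=0}^{n/(d\delta)-1}\exp\bigl(2\pi ijk/(n/(d\delta))\bigr),
\end{equation*}
which equals $\frac{n}{d\delta}\exp(2\pi i\delta b_0 k/n)$ if $\frac{n}{d\delta}\mid k$, and $0$ otherwise. The final bookkeeping step, which I expect to be the only mildly delicate point, is to replace $b_0$ by $\delta'$ in the exponent: writing $b_0=\delta'+td$ and using $k=(n/(d\delta))m$ from the divisibility condition, the extra term $\delta td k/n=tm$ is an integer, so $\exp(2\pi i\delta b_0k/n)=\exp(2\pi i\delta\delta' k/n)$. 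Collecting everything,
\begin{equation*}
T_n(k,s,d)=\frac{n}{d}\sum_{\substack{\delta\mid n\\ (\delta,d)=1\\ \frac{n}{d\delta}\mid k}}\frac{\mu(\delta)}{\delta}\exp(2\pi i\delta\delta' k/n),
\end{equation*}
as claimed. The main obstacle is simply the bookkeeping of conditions $(\delta,d)=1$, $\delta\mid n$, and $\frac{n}{d\delta}\mid k$, together with showing the exponent stabilizes when $b_0$ is replaced by $\delta'$; everything else is routine Möbius inversion and a geometric sum.
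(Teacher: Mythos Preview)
Your proposal is correct and follows essentially the same route as the paper: dispose of the case $(s,d)>1$, apply M\"obius inversion to remove $(a,n)=1$, reduce to $(\delta,d)=1$, parametrize the arithmetic progression, and evaluate the inner geometric sum. Your explicit verification that $\exp(2\pi i\delta b_0 k/n)=\exp(2\pi i\delta\delta' k/n)$ under the condition $\tfrac{n}{d\delta}\mid k$ is a point the paper passes over (it simply takes $j=\delta'+\ell d$ from the outset), so your write-up is in fact slightly more careful on that detail.
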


Note that in the case $d=1$, this recovers the familiar formula
\begin{equation*}
c_n(k)= \sum_{\delta \mid (n,k)} \delta \mu(n/\delta),
\end{equation*}
concerning Ramanujan's sum.

\begin{proof}[Proof of Lemma {\rm \ref{Lemma_Ramanujan_gen}}]
For each term of the sum, $(a,n)=1$. Hence if $a\equiv s$ (mod $d$), then $(s,d)=(a,d)=1$. We assume that $(s,d)=1$ is satisfied
(otherwise the sum is empty and equals zero).

Using the property of the M\"{o}bius $\mu$ function, the given sum can be written as
\begin{equation*}
T_n(k,s,d) = \sum_{\substack{a=1\\ a\equiv s \text{\rm (mod $d$)} }}^n e(ak/n) \sum_{\delta \mid (a,n)} \mu(\delta)
\end{equation*}
\begin{equation*}
= \sum_{\delta \mid n} \mu(\delta) \sum_{\substack{j=1\\ \delta j\equiv s \text{\rm (mod $d$)} }}^{n/\delta} e(\delta jk/n).
\end{equation*}

Let $\delta\mid n$ be fixed. The linear congruence $\delta j\equiv s$ (mod $d$) has solutions in $j$ if and only if $(\delta,d)\mid s$, equivalent to
$(\delta,d)=1$, since $(s,d)=1$. There are $n/(d\delta)$ solutions $j$ (mod $n/\delta$) and these are $j=\delta'+\ell d$, where
$1\le \ell \le n/(d\delta)$ and $\delta'$ is any integer such that $\delta \delta' \equiv s$ \textup{(mod $d$)}.

We deduce that
\begin{equation*}
T_n(k,s,d) = \sum_{\substack{\delta \mid n\\ (\delta,d)=1}} \mu(\delta) \sum_{\ell=1}^{n/(d\delta)} e(\delta(\delta'+\ell d) k/n)
\end{equation*}
\begin{equation*}
= \sum_{\substack{\delta \mid n\\ (\delta,d)=1}} \mu(\delta) e(\delta \delta' k/n) \sum_{\ell=1}^{n/(d\delta)} e(\ell k/(n/d\delta)),
\end{equation*}
where the inner sum is $n/(d\delta)$ if $n/(d\delta)\mid k$ and is $0$ otherwise. This gives the formula.
\end{proof}

\begin{lemma} \label{Lemma_Ramanujan_spec}
Let $n,d\in \N$, $k,s\in \Z$ such that $d\mid n$ and $\nu_p(n)\ge \nu_p(k)+2$ for all primes $p\mid n$. Then
\begin{equation*}
T_n(k,s,d) = \begin{cases} \displaystyle \frac{n}{d} e(ks/n),
& \text{ if $\frac{n}{d} \mid k$ and $(s,d)=1$}, \\ 0, & \text{ otherwise}.
\end{cases}
\end{equation*}
\end{lemma}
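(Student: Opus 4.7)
The plan is to deduce Lemma \ref{Lemma_Ramanujan_spec} directly from Lemma \ref{Lemma_Ramanujan_gen} by exploiting the hypothesis $\nu_p(n) \ge \nu_p(k) + 2$ to kill every term of the inner sum except $\delta = 1$. First, if $(s,d) \neq 1$, Lemma \ref{Lemma_Ramanujan_gen} already gives $T_n(k,s,d) = 0$, so I may assume $(s,d) = 1$ and work with
\begin{equation*}
T_n(k,s,d) = \frac{n}{d} \sum_{\substack{\delta \mid n \\ (\delta,d)=1 \\ \frac{n}{d\delta} \mid k}} \frac{\mu(\delta)}{\delta} \exp(2\pi i \delta \delta' k/n).
\end{equation*}

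The key step is to show that the only $\delta$ contributing is $\delta = 1$. Indeed, because of the factor $\mu(\delta)$, only squarefree $\delta$ matter. Suppose $\delta > 1$ is squarefree with $(\delta, d) = 1$ and pick any prime $p \mid \delta$. Then $p \mid n$, $\nu_p(d) = 0$, and $\nu_p(\delta) = 1$, so the divisibility condition $\frac{n}{d\delta} \mid k$ forces $\nu_p(n) - 1 \le \nu_p(k)$. This contradicts the hypothesis $\nu_p(n) \ge \nu_p(k) + 2$. Therefore no squarefree $\delta > 1$ satisfies all three constraints simultaneously.

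It remains to handle $\delta = 1$. The constraint $\frac{n}{d} \mid k$ is then precisely what must be checked; if it fails, the sum is empty and $T_n(k,s,d) = 0$. If it holds, the defining condition $\delta \delta' \equiv s \pmod d$ becomes $\delta' \equiv s \pmod d$, and I need to verify that $\exp(2\pi i \delta' k/n)$ is independent of the chosen representative and equals $\exp(2\pi i sk/n)$. Writing $\delta' = s + td$ for some $t \in \Z$ and using $k = (n/d) m$ for some $m \in \Z$ (from $n/d \mid k$), the correction factor is $\exp(2\pi i t dk/n) = \exp(2\pi i tm) = 1$, so the contribution of $\delta = 1$ is exactly $\frac{n}{d} \exp(2\pi i sk/n)$, completing the proof.

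The main obstacle is essentially a bookkeeping one — verifying cleanly that the hypothesis $\nu_p(n) \ge \nu_p(k) + 2$ is precisely the arithmetic obstruction preventing any squarefree $\delta > 1$ (coprime to $d$) from satisfying $\frac{n}{d\delta} \mid k$. Once that is in place, the final simplification for $\delta = 1$ is routine, hinging only on the compatibility check that different lifts of $s \bmod d$ yield the same exponential under the assumption $\frac{n}{d} \mid k$.
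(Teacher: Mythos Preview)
Your argument is correct and follows essentially the same route as the paper: both invoke Lemma \ref{Lemma_Ramanujan_gen}, use the hypothesis $\nu_p(n)\ge \nu_p(k)+2$ to eliminate every squarefree $\delta>1$ via the same $p$-adic contradiction, and then evaluate the surviving $\delta=1$ term. The only difference is cosmetic: the paper simply picks $\delta'=s$ directly, whereas you additionally verify that the exponential is independent of the lift of $s$ modulo $d$.
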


\begin{proof}[Proof of Lemma {\rm \ref{Lemma_Ramanujan_spec}}]
We use Lemma \ref{Lemma_Ramanujan_gen}. Assume that $(s,d)=1$. In the sum it is enough to consider the squarefree values of $\delta$,
that is $\nu_p(\delta)\le 1$
for all primes $p\mid n$, since otherwise $\mu(\delta)=0$. We claim that the only possible value is $\delta=1$, that is $\nu_p(\delta)=0$
for all primes $p\mid n$. Indeed, if there is a prime $p$ such that $\nu_p(\delta)=1$, then $\nu_p(d)=0$, since $(d,\delta)=1$, and by the
assumption $\nu_p(n)\ge \nu_p(k)+2$ we get $\nu_p(\frac{n}{d\delta})=\nu_p(n)-1\ge \nu_p(k)+1>\nu_p(k)$, which contradicts the condition
$\frac{n}{d\delta}\mid k$.

Now for $\delta=1$ one can select $\delta'=s$. This completes the proof.
\end{proof}

\begin{lemma} \label{Lemma_Ramanujan_spec_2}
Let $n,d\in \N$, $k,s\in \Z$ such that $d\mid n$ and $\nu_p(n)\le \nu_p(k)$ for all primes $p\mid n$
\textup{(that is, $n\mid k$, in particular $k=0$)}. Then
\begin{equation*}
T_n(k,s,d)= \sum_{\substack{a=1\\ (a,n)=1\\ a\equiv s \text{\rm (mod $d$)} }}^n 1  =
\begin{cases} \displaystyle \frac{\varphi(n)}{\varphi(d)}, & \text{ if $(s,d)=1$}, \\ 0, & \text{ otherwise},
\end{cases}
\end{equation*}
\end{lemma}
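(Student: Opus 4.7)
The plan is to derive both equalities in the statement as direct specializations of Lemma \ref{Lemma_Ramanujan_gen}.

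First I would observe that the hypothesis $\nu_p(n)\le \nu_p(k)$ for every $p\mid n$ means exactly $n\mid k$, so $\exp(2\pi iak/n)=1$ for every $a\in \Z$. Consequently the defining sum \eqref{def_T} for $T_n(k,s,d)$ reduces to the counting sum $\sum_{a,\,(a,n)=1,\,a\equiv s \pmod d} 1$, establishing the first equality in the statement.

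Next I would dispose of the case $(s,d)>1$. Since $d\mid n$, any $a$ with $(a,n)=1$ automatically satisfies $(a,d)=1$; combined with $a\equiv s \pmod d$ this forces $(s,d)=(a,d)=1$, a contradiction, so the counting set is empty.

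For the main case $(s,d)=1$, I would invoke Lemma \ref{Lemma_Ramanujan_gen}: under $n\mid k$ the divisibility condition $\frac{n}{d\delta}\mid k$ holds for every $\delta\mid n$, and every exponential factor $\exp(2\pi i \delta \delta' k/n)$ equals $1$. The lemma therefore reduces to
$$T_n(k,s,d) = \frac{n}{d} \sum_{\substack{\delta \mid n \\ (\delta,d)=1}} \frac{\mu(\delta)}{\delta}.$$
Since $d\mid n$, the set of primes dividing $d$ is a subset of that of $n$, so I would evaluate the Möbius sum as an Euler product:
$$\sum_{\substack{\delta \mid n \\ (\delta,d)=1}} \frac{\mu(\delta)}{\delta} = \prod_{\substack{p\mid n \\ p\nmid d}}\left(1-\frac{1}{p}\right) = \frac{\prod_{p\mid n}(1-1/p)}{\prod_{p\mid d}(1-1/p)} = \frac{\varphi(n)/n}{\varphi(d)/d}.$$
Multiplying by $n/d$ collapses to $\varphi(n)/\varphi(d)$, as desired.

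There is no serious obstacle; the only point requiring care is the factorization of the restricted Möbius sum, which hinges on $d\mid n$. As a sanity check one could alternatively argue directly that the natural surjection $(\Z/n\Z)^\times \to (\Z/d\Z)^\times$ has fibres of constant size $\varphi(n)/\varphi(d)$, and the counting set is exactly the fibre above $s\bmod d$ when $(s,d)=1$ and empty otherwise—this gives the same answer and confirms the computation.
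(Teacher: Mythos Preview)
Your proposal is correct and follows essentially the same route as the paper: both invoke Lemma~\ref{Lemma_Ramanujan_gen}, use $n\mid k$ to drop the divisibility constraint and the exponentials, and then evaluate the remaining M\"obius sum as the Euler product $\prod_{p\mid n,\,p\nmid d}(1-1/p)=\frac{\varphi(n)/n}{\varphi(d)/d}$. Your write-up is simply more explicit about the first equality and the $(s,d)>1$ case, and your closing remark about the fibres of $(\Z/n\Z)^\times \to (\Z/d\Z)^\times$ is a pleasant sanity check not present in the paper.
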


\begin{proof}[Proof of Lemma {\rm \ref{Lemma_Ramanujan_spec_2}}]
Use Lemma \ref{Lemma_Ramanujan_gen}. Assume that $(s,d)=1$. Since $n\mid k$, in the sum $e(\delta \delta'k/n)=1$ for every $\delta$,
and we have
\begin{equation*}
T_n(k,s,d)= \frac{n}{d} \sum_{\substack{\delta \mid n\\ (\delta,d)=1}} \frac{\mu(\delta)}{\delta}= \frac{n}{d} \prod_{\substack{p\mid n \\ p\nmid d}}
\left(1-\frac1{p}\right)=\frac{n}{d}\cdot \frac{\varphi(n)/n}{\varphi(d)/d}=\frac{\varphi(n)}{\varphi(d)}.
\end{equation*}
\end{proof}

Lemma \ref{Lemma_Ramanujan_spec_2} is known in the literature, and is usually proved by the inclusion-exclusion principle.
See, e.g., \cite[Th.\ 5.32]{Apo1976}.

\begin{proof}[Proof of Theorem {\rm \ref{Th_gen}}]
Since the function $f_n$ is even (mod $n$), we have for every $a\in \N$,
\begin{equation*}
f_n(a)=f_n((a,n))= \sum_{d\mid (a,n)} (\mu*f_n)(d).
\end{equation*}

Hence,
\begin{equation*}
S_f(n,k,s)= \sum_{\substack{a=1\\ (a,n)=1}}^n e(ak/n) \sum_{\substack{d\mid (a-s,n)}} (\mu*f_n)(d)
\end{equation*}
\begin{equation*}
= \sum_{d\mid n} (\mu*f_n)(d) \sum_{\substack{a=1\\ (a,n)=1\\ a\equiv s \text{\rm (mod $d$)}}}^n  e(ak/n),
\end{equation*}
that is,
\begin{equation} \label{4}
S_f(n,k,s) = \sum_{d\mid n} (\mu*f_n)(d) T_n(k,s,d).
\end{equation}

According to Lemma \ref{Lemma_Ramanujan_gen} we deduce
\begin{equation*}
S_f(n,k,s)= n \sum_{\substack{d\mid n\\ (d,s)=1}} \frac{(\mu*f_n)(d)}{d}  \sum_{\substack{\delta \mid n \\ (\delta,d)=1\\ \frac{n}{d\delta}\mid k}}
\frac{\mu(\delta)}{\delta} e(\delta \delta' k/n),
\end{equation*}
\begin{equation*}
= n \sum_{\substack{d\delta e=n\\ (d,\delta s)=1\\ e\mid k}} \frac{(\mu*f_n)(d)}{d} \cdot \frac{\mu(\delta)}{\delta} e(\delta \delta' k/n),
\end{equation*}
which gives the result.
\end{proof}

\begin{proof}[Proof of Theorem {\rm \ref{Th_spec}}]
By using identity \eqref{4} and Lemma \ref{Lemma_Ramanujan_spec} we have
\begin{equation*}
S_f(n,k,s)= n e(ks/n) \sum_{\substack{d\mid n \\ \frac{n}{d} \mid k\\ (d,s)=1}}
\frac{(\mu*f_n)(d)}{d},
\end{equation*}
and interchange $d$ and $n/d$. This gives identity \eqref{id_spec}.
\end{proof}

\begin{proof}[Proof of Theorem {\rm \ref{Th_spec2}}]
Follows immediately by identity \eqref{4} and Lemma \ref{Lemma_Ramanujan_spec_2}.
\end{proof}

\begin{proof}[Proof of Theorem {\rm \ref{Th_seq_multipl}}]
Since $(n_1,n_2)=1$, if $a_1$ runs over a reduced residue system (mod $n_1$) and $a_2$ runs through a reduced residue system (mod $n_2$), then
$a=a_1n_2+a_2n_1$  runs through a reduced residue system (mod $n_1n_2$). Hence
\begin{equation*}
S_f(n_1n_2,k,s)= \sum_{\substack{a_1=1\\ (a_1,n_1)=1}}^{n_1} \sum_{\substack{a_2=1\\ (a_2,n_2)=1}}^{n_2} f_{n_1n_2}(a_1n_2+a_2n_1-s)
e((a_1n_2+a_2n_1)k/(n_1n_2)).
\end{equation*}

Using that $f_n$ are even functions (mod $n$) and that $n\mapsto f_n(a)$ is multiplicative for every $a\in \Z$ we have
\begin{equation*}
f_{n_1n_2}(a_1n_2+a_2n_1-s) =  f_{n_1}(a_1n_2+a_2n_1-s) f_{n_2}(a_1n_2+a_2n_1-s)
\end{equation*}
\begin{equation*}
= f_{n_1}((a_1n_2+a_2n_1-s,n_1)) f_{n_2}((a_1n_2+a_2n_1-s,n_2))
\end{equation*}
\begin{equation*}
=f_{n_1}((a_1n_2-s,n_1)) f_{n_2}((a_2n_1-s,n_2)) = f_{n_1}(a_1n_2-s) f_{n_2}(a_2n_1-s)
\end{equation*}

Therefore,
\begin{equation*}
S_f(n_1n_2,k,s)= \sum_{\substack{a_1=1\\ (a_1,n_1)=1}}^{n_1} f_{n_1}(a_1n_2-s) e(a_1n_2n_2'k/n_1)
\end{equation*}
\begin{equation*}
\times \sum_{\substack{a_2=1\\ (a_2,n_2)=1}}^{n_2} f_{n_2}(a_2n_1-s) e(a_2n_1n_1'k/n_2)
= S_f(n_1,kn_2',s) S_f(n_2,kn_1',s),
\end{equation*}
where $a_1n_2$ runs through a reduced residue system (mod $n_1$) and $a_2n_1$ runs through a reduced residue
system (mod $n_2$).
\end{proof}

\begin{proof}[Proof of Theorem {\rm \ref{Th_n_1_n_2}}]
Using Theorems \ref{Th_seq_multipl}, \ref{Th_spec} and \ref{Th_spec2} we have
\begin{equation*}
S_f(n,k,s)= S_f(n_1n_2,k,s) = S_f(n_1,kn_2',s) S_f(n_2,kn_1',s)
\end{equation*}
\begin{equation*}
= e(kn_2's/n_1) \sum_{\substack{d\mid (n_1,kn_2')\\ (n_1/d,s)=1}} d\, (\mu*f_{n_1})(n_1/d) \varphi(n_2)
\sum_{\substack{d\mid n_2\\ (d,s)=1}} \frac{(\mu*f_{n_2})(d)}{\varphi(d)},
\end{equation*}
with
\begin{equation*}
e(kn_2's/n_1)= e(ks n_2 n_2'/n_1n_2)= e(ks(1+tn_1)/n_1n_2)
\end{equation*}
\begin{equation*}
=  e(ks/n) e(kst/n_2)= e(ks/n),
\end{equation*}
where $k/n_2\in \Z$ by the definition of $n_2$. Also, since $(n_2',n_1)=1$, one has $(n_1,kn_2')=(n_1,k)$.
\end{proof}

\begin{proof}[Proof of Corollary {\rm \ref{Cor_m}}]
Apply Theorem \ref{Th_n_1_n_2} for $f_n(a)=(a,n)^m$, $f_n(a)=\sigma_m((a,n))$ and $f_n(a)=c_n(a)$, respectively.

If $f_n(a)=(a,n)^m$, then $(\mu*f_n)(d)=J_m(d)$ for every $d\mid n$, as noted in Corollary \ref{Remark_spec_cases}. Also, for $d\mid (n_1,k)$ we have
$\nu_p(n_1)\ge \nu_p(k)+2\ge \nu_p(d)+2$ for every prime $p\mid n_1$ which implies that $J_m(n_1/d)= J_m(n_1)/d^m$, giving \eqref{11}.

If $f_n(a)=\sigma_m((a,n))$, then $(\mu*f_n)(d)= d^m$ for every $d\mid n$, leading to \eqref{12}

If $f_n(a)=c_n(a)$, then $(\mu*f_n)(d)=d \mu(n/d)$ for every $d\mid n$. See Corollary \ref{Remark_spec_cases}.  This implies \eqref{13}.
\end{proof}

\begin{proof}[Proof of Corollary {\rm \ref{Cor_m_1}}]
Follows immediately by Corollary \ref{Cor_m}.
\end{proof}

\section{Acknowledgement} This work was supported by the European Union, co-financed by the European
Social Fund EFOP-3.6.1.-16-2016-00004.


\begin{thebibliography}{99}

\bibitem{Apo1976} T.~M.~Apostol, {\it Introduction to Analytic Number Theory}, Springer, 1976.

\bibitem{Coh1959} E.~Cohen, Representations of even functions (mod $r$), II. Cauchy products, {\it Duke Math. J.}
{\bf 26} (1959), 165--182.

\bibitem{LiKimJNT} Y.~Li and D.~Kim, Menon-type identities with additive characters,
{\it J. Number Theory} {\bf 192} (2018), 373--385.

\bibitem{McC1986} P.~J.~McCarthy, {\it Introduction to Arithmetical Functions}, Springer, 1986.

\bibitem{Men1965} P.~K.~Menon, On the sum $\sum \,(a-1,\,n) [(a,\, n) = 1]$, {\it J. Indian
Math. Soc. (N.S.)} {\bf 29} (1965), 155--163.

\bibitem{Tot2011} L.~T\'oth, Menon's identity and arithmetical sums representing functions of several variables,
{\it Rend. Sem. Mat. Univ. Politec. Torino} {\bf 69} (2011), 97--110.

\bibitem{Tot2013} L.~T\'oth, Another generalization of the gcd-sum function, {\it Arab J. Math.} {\bf 2} (2013), 313--320.

\bibitem{Tot2018} L.~T\'oth, Menon-type identities concerning Dirichlet characters, {\it Int. J. Number Theory} {\bf 14} (2018),
1047--1054.

\bibitem{Tot} L.~T\'oth, Short proof and generalization of a Menon-type identity by Li, Hu and Kim, {\it Taiwanese J. Math.} {\bf 23} 
(2019), 557--561.

\bibitem{TotHau2011} L.~T\'oth  and P.~Haukkanen, The discrete Fourier transform of $r$-even functions, {\it Acta Univ. Sapientiae, Math.}
{\bf 3} (2011), 5--25.

\bibitem{ZhaCao} X.-P.~Zhao and Z.-F.~Cao, Another generalization of Menon's identity, {\it Int. J. Number Theory} {\bf 13} (2017),
2373--2379.

\end{thebibliography}
\end{document}